\newcommand{\R}{\mathbb{R}}
\newcommand{\N}{\mathbb{N}}
\newcommand{\Z}{\mathbb{Z}}
\newtheorem{theorem}{Theorem}[section]
\newtheorem{proposition}[theorem]{Proposition}
\newtheorem{corollary}[theorem]{Corollary}
\newcommand{\cao}{\c{c}\~ao}
\newcommand{\dpt}{\displaystyle}
\begin{document}

\begin{center}
{\large {\bf The Discrete Markus-Yamabe Problem for Symmetric Planar Polynomial Maps}}\\
\mbox{} \\
\begin{tabular}{ccc}
{\bf Bego\~na Alarc\'on$^{\mbox{a}}$} & {\bf Sofia B.\ S.\ D.\ Castro$^{\mbox{b},\dagger}$} & {\bf Isabel S.\ Labouriau$^{\mbox{a}}$} 
\end{tabular}
\end{center}

\bigbreak
\noindent {\small $^{\mbox{a}}$} Centro de Matem\'atica da Universidade do Porto, Rua do Campo Alegre 687, 4169-007 Porto, Portugal.

\noindent {\small $^{\mbox{b}}$} Faculdade de Economia do Porto, Rua Dr. Roberto Frias, 4200-464 Porto, Portugal.
 
\noindent {\small $\dagger$} Corresponding author:  sdcastro@fep.up.pt; phone +351 225 571 151; fax +351 225 505 050.

\bigbreak

\begin{center} To the memory of Carlos Gutierrez \end{center}
\begin{abstract}
We probe deeper into the Discrete Markus-Yamabe Question for polynomial planar maps and 
into the
normal form for those maps which answer this question in the affirmative. Furthermore, in a symmetric context, we show that the only nonlinear equivariant polynomial maps providing an affirmative answer to the Discrete Markus-Yamabe Question are those possessing $\Z_2$ as their group of symmetries.
We use this to 
establish two new tools which give information about the spectrum of a planar polynomial map.
\end{abstract}

\bigbreak

\noindent{\bf Keywords:} Markus-Yamabe Conjecture; polynomial maps; symmetry

\bigbreak

\section{Introduction}

The Discrete Markus-Yamabe Question, DMYQ($n$), in dimension $n$ was formulated by
Cima {\em et al.} \cite{CGM} as follows:
\medbreak
\noindent{\em
{\bf [DMYQ($n$)]}
Let $F:\R^n\longrightarrow\R^n$ be a $C^1$ map such that $F(0)=0$ and for any $x\in\R^n$, $ JF(x)$ has all its eigenvalues with modulus less than one. 
Is it true that $0$ is a global attractor for the discrete dynamical system generated by $F$?
}
\medbreak\noindent
These authors have found sufficient conditions for planar maps to provide an affirmative answer to this question. 
We proceed with the study in dimension $2$,
since this is the only interesting dimension: the answer is negative for higher dimensions, see Cima {\em et al.} \cite{CvdEGHM} for examples in dimensions higher than $3$ and van den Essen and Hubbers \cite{vdEH} for dimensions higher than $4$, and is affirmative in dimension $1$.


An attentive look at the proof of Theorem B in Cima {\em et al.} \cite{CGM} produces a more explicit description of polynomial maps satisfying sufficient conditions for an affirmative answer to the DMYQ($2$), leading to a normal form for such maps, as in Chamberland~\cite{Chamberland}. In particular
this may be used  for  testing a  map for eigenvalues outside the unit disk.

After having established the normal form for maps that answer the DMYQ($2$) in the affirmative, we look at the symmetric setting. We formulate the Symmetric Discrete Markus-Yamabe Question, SDMYQ($n$), as follows:
\medbreak
\noindent{\em
{\bf [SDMYQ($n$)]}
Let $F:\R^n\longrightarrow\R^n$ be a $C^1$ map such that $F(0)=0$ and for any $x\in\R^n$, $ JF(x)$ has all its eigenvalues with modulus less than one. 
Suppose that the symmetries of $F$ form a nontrivial compact subgroup of $O(n)$.
Is it true that $0$ is a global attractor for the discrete dynamical system generated by $F$?
}
\medbreak

Note that a counterexample to the SDMYQ($2$) is given in  \cite[theorem D]{CGM}, where $F$ is a rational map, and the symmetries of $F$ constitute the group $\Z_4$.

We address this question for $n=2$ when $F$ is polynomial.
We find that only when the group of symmetries of the map is  $\Z_2$
(a group of order two),
 can a nonlinear polynomial map 
 provide an affirmative answer to the DMYQ($2$) and {\sl a fortiori} to the SDMYQ($2$).
In fact, we show that this is the only symmetry group compatible with the hypotheses of the  DMYQ($2$).
 This is then used as a test for the existence of expanding eigenvalues in symmetric polynomial maps.

\section{Normal Forms for Planar Polynomial Maps}

We look deeper into the admissible form of polynomial maps which provide an affirmative answer to the DMYQ($2$).

\begin{theorem}[Normal Form]\label{poly_lemma}
Let $F:\R^2 \rightarrow \R^2$ be a polynomial map such that $F(0)=0$ and all the eigenvalues of $JF(x,y)$ have modulus smaller than one for all $(x,y) \in \R^2$. Then $F(x,y) = B (x,y)^{T} + u^2 p(u)(\alpha, \beta)^ {T}$, where $B$ is a real matrix, $\alpha, \beta \in \R$, $p$ is a real polynomial and $u=ax+by$ for $a,b \in \R$. 
\end{theorem}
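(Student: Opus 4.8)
The plan is to show that the spectral hypothesis forces $JF$ to have constant trace and determinant, to encode the nonlinear part of $F$ by a single scalar potential, and then to analyse the resulting polynomial identity by homogeneity. First I would observe that for a $2\times2$ matrix the trace and determinant are $\lambda_1+\lambda_2$ and $\lambda_1\lambda_2$, so $|\lambda_i|<1$ gives $|\operatorname{tr}JF(x,y)|<2$ and $|\det JF(x,y)|<1$ on all of $\R^2$; since $\operatorname{tr}JF$ and $\det JF$ are polynomials bounded on $\R^2$ they are constant. Writing $F(x,y)=B(x,y)^{T}+G(x,y)$ with $B=JF(0,0)$ and $G=(G_1,G_2)$ the part of order $\ge 2$, constancy of the trace gives $\operatorname{tr}JG=\partial_xG_1+\partial_yG_2\equiv 0$, so by the Poincar\'e lemma there is a polynomial $H$, having only terms of degree $\ge 3$, with $G_1=\partial_yH$ and $G_2=-\partial_xH$. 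Expanding $\det(B+JG)=\det B$ and using $\operatorname{tr}JG=0$ then produces the polynomial identity
\begin{equation}
(b_{22}-b_{11})\,H_{xy}+b_{12}\,H_{xx}-b_{21}\,H_{yy}+\bigl(H_{xx}H_{yy}-H_{xy}^{2}\bigr)=0 ,
\tag{$\star$}
\end{equation}
that is, $\det(\operatorname{Hess}H)=-\mathcal L(H)$ for a fixed second-order constant-coefficient operator $\mathcal L$.

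Next I would exploit $(\star)$ by degree. If $d=\deg H\ge 3$ and $H_d$ is its degree-$d$ part, then $\det(\operatorname{Hess}H)$ has degree-$(2d-4)$ homogeneous component $\det(\operatorname{Hess}H_d)$, while $\mathcal L(H)$ has degree $\le d-2<2d-4$; hence $(\star)$ forces $\det(\operatorname{Hess}H_d)\equiv 0$. The classical fact that a binary form of degree $d$ whose Hessian covariant vanishes identically is a constant multiple of a $d$-th power of a linear form then gives $H_d=c\,(ax+by)^{d}$ for real $a,b$ and $c\neq 0$. Set $u=ax+by$; this identifies the linear form appearing in the statement.

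To conclude that $H$ depends on $u$ alone I would change to linear coordinates $(u,v)$ (any $v$ completing $u$ to a coordinate system), in which $(\star)$ keeps the same shape with $\det(\operatorname{Hess}H)=H_{uu}H_{vv}-H_{uv}^{2}$ and $H_d=c\,u^{d}$. Writing $H=\sum_{j\ge 0}P_j(u)\,v^{j}$, I would argue by downward induction on $j$: comparing in $(\star)$ the coefficients of the highest surviving powers of $v$ forces $P_j\equiv 0$ for $j\ge 1$, so $H=q(u)$ for a univariate polynomial $q$. Equivalently one can show that $(\star)$ forces $\det(\operatorname{Hess}H)\equiv 0$ and then invoke the rigidity statement that a polynomial on $\R^{2}$ with identically vanishing Hessian determinant is, up to an affine term, a function of a single linear form. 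Once $H=q(u)$ we get $G=(\partial_yH,-\partial_xH)=q'(u)\,(b,-a)^{T}$, and since $G$ has only terms of degree $\ge 2$ we have $q'(u)=u^{2}p(u)$ for a polynomial $p$; hence $F(x,y)=B(x,y)^{T}+u^{2}p(u)(\alpha,\beta)^{T}$ with $(\alpha,\beta)=(b,-a)$, which is the asserted normal form. It is worth recording that substituting $H=q(u)$ back into $(\star)$ forces $(b_{22}-b_{11})ab+b_{12}a^{2}-b_{21}b^{2}=0$, i.e. $(-b,a)$ is an eigenvector of $B$; this is the compatibility condition between $B$ and the direction $(a,b)$, and in particular $B$ must have a real eigenvector.

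The main obstacle is the third step: passing from ``the leading homogeneous part of $H$ is a power of a linear form'' to ``$H$ itself is a function of that linear form''. The difficulty is that $\det(\operatorname{Hess}H)$ is quadratic in the second derivatives, so removing one homogeneous layer of $H$ does not preserve the form of $(\star)$ and one must track the cross terms carefully; routing the argument through the vanishing-Hessian-determinant rigidity lemma isolates this difficulty cleanly, but that lemma in turn genuinely uses the global (polynomial) hypothesis in order to exclude graphs that are locally developable yet not globally cylindrical.
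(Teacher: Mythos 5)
Your first three steps are correct and take a genuinely different route from the paper: boundedness forces $\operatorname{tr}JF$ and $\det JF$ to be constant, the divergence-free nonlinear part can be written $G=(H_y,-H_x)$ with $H$ containing only terms of degree $\ge 3$, constancy of the determinant is exactly your identity $(\star)$, and the top-degree comparison giving $H_d=c(ax+by)^d$ is sound. (The paper does none of this: it quotes Theorem~B of \cite{CGM}, which asserts that such an $F$ is linearly conjugate to a triangular map $G(u,v)=K(u,v)^T+(0,uq(u))^T$, and then only carries out the conjugation computation.) The genuine gap is the step you yourself flag: the passage from $H_d=cu^d$ to $H=q(u)$, and the induction you sketch does not work as described. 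Writing $H=\sum_j P_j(u)v^j$ with $m=\deg_v H\ge 1$, the coefficient of the highest surviving power $v^{2m-2}$ of $(\star)$ is, up to a nonzero constant, $m(m-1)P_mP_m''-m^2(P_m')^2$ (for $m\ge 3$), and setting it to zero does \emph{not} force $P_m\equiv 0$: any nonzero constant $P_m$ satisfies it, and the next coefficients then only force $P_{m-1}$ to be affine, $P_{m-2}$ quadratic, and so on. In fact no argument that merely inspects leading $v$-coefficients of $(\star)$ can succeed, because the identity by itself has solutions far from the normal form: in the hyperbolic instance coming from $B=\operatorname{diag}(0,1)$, where $(\star)$ reads $H_{uv}+H_{uu}H_{vv}-H_{uv}^2=0$, every $H=f(u)+uv$ with $f$ an arbitrary polynomial is a solution, and it is not a function of any single linear form (its Hessian determinant is $-1$). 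What rules such solutions out is precisely the normalization, unused in your sketch, that $H$ has no terms of degree $<3$; moreover the operator $\mathcal L$ must enter the argument (its type --- elliptic, parabolic, hyperbolic --- matches the eigenvalue configuration of $B$, and in the elliptic case the conclusion has to be the stronger statement that the nonlinearity vanishes altogether).

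The alternative route you mention --- first show that $(\star)$ forces $\det\operatorname{Hess}H\equiv 0$ and then invoke cylinder rigidity for polynomials --- has the same gap: that implication is asserted, not proved, and it is essentially as deep as the theorem itself, while the rigidity lemma, though true for bivariate polynomials, is itself a nontrivial fact you would have to prove or cite. So what you have done is reduce the theorem to a structural claim about the Monge--Amp\`ere-type identity $(\star)$ which is, in substance, the triangularization result the paper outsources to Theorem~B of \cite{CGM} (equivalently, the characterization in \cite{Chamberland} of planar maps with constant Jacobian eigenvalues); that claim remains unproved in your proposal. Either carry out the induction in full, using the order-$\ge 3$ normalization and a case analysis on $\mathcal L$, or cite \cite{CGM} or \cite{Chamberland} at this point as the paper does. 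The remaining parts of your write-up, including the closing observation that $q''\not\equiv 0$ forces $(-b,a)$ to be a real eigenvector of $B$, are correct and consistent with the relation $(\alpha,\beta)\propto(-b,a)$ appearing in the paper's proof.
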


\begin{proof}
Theorem B in Cima {\em et al.} \cite{CGM} proves that the condition on the eigenvalues of $JF$ implies that $F$ is obtained by an affine transformation from a triangular map. The assumption that the origin is a fixed-point allows us to work with linear instead of affine transformations.
Furthermore, the triangular map is such that the off-diagonal terms can be described by a polynomial in one variable alone. Therefore, from the triangular map $G(u,v)=K(u,v)^ {T}+ (0, u q(u))^ {T}$ with $K$ a real diagonal matrix and $q$ a real polynomial, we obtain by a linear change of coordinates $L$,

$$
F(x,y) = L^{-1} G(L(x,y)).
$$
Let 
$$
L=\left( \begin{array}{cc}
a & b \\
c & d
\end{array} \right)
$$ 
then for $u=ax+by$ we get

$$F(x,y)=L^{-1}\left(KL\left( \begin{array}{c}
x \\
y
\end{array} \right)+ \left( \begin{array}{c}
0 \\
uq(u)
\end{array} \right)\right)= $$
$$=L^{-1}KL\left( \begin{array}{c}
x \\
y
\end{array} \right)+uq(u)L^{-1}\left( \begin{array}{c}
0 \\
1
\end{array} \right)$$
Let $A=L^{-1}KL$,  then $A$ is a matrix with real eigenvalues. Consider now 
\begin{equation}
\label{alfabeta}(\alpha,\beta)^{T}=L^{-1}(0,1)^T=\dfrac{1}{ad-bc} \left(\begin{array}{c}-b\\ a\end{array}\right),
\end{equation}
 then we can rewrite $F$ as follows:

$$F(x,y) = A \left( \begin{array}{c}
x \\
y
\end{array} \right) + uq(u)\left( \begin{array}{c}
\alpha \\
\beta
\end{array} \right)$$

The proof follows taking $$B\left( \begin{array}{c}
x \\
y
\end{array} \right)=A\left( \begin{array}{c}
x \\
y
\end{array} \right)+uq(0)\left( \begin{array}{c}
\alpha \\
\beta
\end{array} \right)
\qquad\mbox{and}\qquad 
p(u)=\frac{q(u)-q(0)}{u} \ .$$ 
\end{proof}

Theorem \ref{poly_lemma} is better used for identifying which maps do not provide an affirmative answer to the DMYQ($2$). In fact, while it may not be easy to recognize an admissible form by looking at a map, it will be straighforward to assert that, for instance, $F(x,y)=(\frac{x}{2}+y^2, \frac{y}{3}+y^3)$ will not provide an affirmative answer to the DMYQ($2$). This is because the non-linear polynomials in the first and second coordinate have different degrees.

This provides a criterion for studying the spectrum of a polynomial planar map as stated in the following:

\begin{corollary}\label{Coro1}
Let $F:\R^2\rightarrow\R^2$ be a polynomial map. If the quotient of the nonlinear parts of the coordinates of $F$ is not constant, then there exists a point in $\R^2$ where the jacobian of $F$ has an eigenvalue outside the unit disk.
\end{corollary}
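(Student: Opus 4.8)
The statement is precisely the contrapositive of the Normal Form, so the plan is to assume its conclusion fails and derive a contradiction with the hypothesis on the quotient. Concretely, suppose that $JF(x,y)$ has all its eigenvalues in the open unit disk for every $(x,y)\in\R^2$. Constant terms affect neither $JF$ nor the nonlinear parts of the coordinate functions, so after replacing $F$ by $F-F(0)$ we may assume $F(0)=0$; then $F$ satisfies the hypotheses of Theorem~\ref{poly_lemma}, and therefore $F(x,y)=B(x,y)^{T}+u^{2}p(u)(\alpha,\beta)^{T}$ with $u=ax+by$ and $(\alpha,\beta)\in\R^2$.

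Writing $F=(F_1,F_2)$, the matrix $B$ contributes only to the linear parts, so the nonlinear part of $F_1$ is $\alpha\,u^{2}p(u)$ and that of $F_2$ is $\beta\,u^{2}p(u)$. Both are scalar multiples of the single polynomial $u^{2}p(u)$, hence their quotient equals the constant $\alpha/\beta$. This contradicts the assumption that the quotient of the nonlinear parts of the coordinates of $F$ is not constant, which establishes the corollary.

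Two degenerate situations must be dispatched along the way, and this bookkeeping is really the only obstacle, the heart of the matter being the proportionality of the two nonlinear parts forced by Theorem~\ref{poly_lemma}. If $u^{2}p(u)\equiv 0$ then $F$ is affine and has no nonlinear part, so the hypothesis is vacuous (or meaningless) and there is nothing to prove; one should state at the outset that $F$ is assumed genuinely nonlinear. One should also record that $(\alpha,\beta)\neq(0,0)$: by \eqref{alfabeta} one has $(\alpha,\beta)=(ad-bc)^{-1}(-b,a)$, and $(a,b)=(0,0)$ would give $u\equiv 0$, again killing the nonlinear part. With the natural convention for the quotient when exactly one of $\alpha,\beta$ vanishes (so that one coordinate is linear and the other is not), the argument above covers every case.
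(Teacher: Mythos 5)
Your proof is correct and follows exactly the route the paper intends: the corollary is stated as an immediate contrapositive of Theorem~\ref{poly_lemma}, since in the normal form the nonlinear parts of the two coordinates are the proportional polynomials $\alpha u^2p(u)$ and $\beta u^2p(u)$. Your extra care with the normalization $F(0)=0$ and the degenerate cases ($p\equiv 0$, $\alpha$ or $\beta$ zero) is sound and only makes explicit what the paper leaves unsaid.
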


\section{Symmetric Planar Polynomial Maps}

In the context of symmetric maps some further results may be obtained. As usual, the reference for the symmetric context is the book by Golubitsky {\em et al.} \cite{GSS}. Assume for the rest of this section that $F:\R^2 \rightarrow \R^2$ has a compact Lie group $\Gamma$ as its group of linear symmetries. That is to say that $\Gamma$ is the largest group such that for all $(x,y) \in \R^2$ and all $\gamma \in \Gamma$ we have
\begin{equation}\label{equivariant}
F(\gamma \cdot(x,y)) = \gamma\cdot F(x,y).
\end{equation}
We always assume nontrivial groups and actions.

We single out two possible group elements of $\Gamma$. These are represented by $\kappa$ and $\zeta_n $ and act on elements of the plane as
\begin{eqnarray*}
\kappa\cdot (x,y)^{T} & = & (x, -y)^{T} \\
\zeta_n \cdot (x,y)^{T} = e^{2\pi i/n}. (x,y)^{T} & = & (x\cos{\frac{2\pi}{n}}-y\sin{\frac{2\pi}{n}}, x\sin{\frac{2\pi}{n}}+y\cos{\frac{2\pi}{n}})^{T},
\end{eqnarray*}
where $n \in \N$.

Note that any reflection may be written as $\kappa$ above in suitable coordinates.

\begin{proposition}
Let $\Gamma$ be a compact Lie group acting on $\R^2$. Assume $\Gamma$ is the symmetry group of a polynomial map $F$.
\begin{itemize}
	\item[(i)]  If $\kappa \in \Gamma$ then $F$ does not answer the DMYQ($2$) in the affirmative unless $F$ is of the form:
	$$
	F(x,y)=\left(\begin{array}{cc}d_1&0\\0&d_2\end{array}\right)\left(\begin{array}{c}x\\ y\end{array}\right)+
	y^2p(y^2)\left(\begin{array}{c}1\\ 0\end{array}\right)\ .
	$$
	\item[(ii)]  If $\zeta_n  \in \Gamma$ for $n \geq 3$ then $F$ does not answer the DMYQ($2$) in the affirmative unless $F$ is linear.
	Moreover,  the linear part of $F$ is either a homothety or a rotation matrix.
\end{itemize}
\end{proposition}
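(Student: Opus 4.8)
The plan is to push everything through the normal form of Theorem~\ref{poly_lemma}. Answering the DMYQ($2$) in the affirmative presupposes that $JF$ has all eigenvalues of modulus $<1$, so $F(x,y)=B(x,y)^T+h(u)(\alpha,\beta)^T$ with $h(u)=u^2p(u)$ and $u=ax+by$; conversely, once we show that a $\kappa$- (resp.\ $\zeta_n$-) equivariant $F$ satisfying this eigenvalue hypothesis has the asserted shape, any equivariant map of a different shape violates the hypothesis and so cannot answer affirmatively. Two preliminary remarks are used throughout. First, $a\alpha+b\beta=0$: in the notation of the proof of Theorem~\ref{poly_lemma}, $(\alpha,\beta)^T=\frac1{\det L}(-b,a)^T$ with $(a,b)$ the first row of $L$, so $a\alpha+b\beta=0$ identically; equivalently, since $\mathrm{tr}\,JF=\mathrm{tr}\,B+(a\alpha+b\beta)\,h'(u)$ and $h'$ is a non-constant polynomial whenever $p\not\equiv0$, a non-zero value of $a\alpha+b\beta$ would make $\mathrm{tr}\,JF$ unbounded, contradicting the eigenvalue hypothesis. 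Second, as $\kappa$ and $\zeta_n$ are linear and $h(u)(\alpha,\beta)^T$ has only monomials of degree $\ge2$, equivariance of $F$ is equivalent to equivariance of $B(x,y)^T$ together with equivariance of $N(x,y):=h(u)(\alpha,\beta)^T$.

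For (i), $\kappa B=B\kappa$ forces $B=\mathrm{diag}(d_1,d_2)$. If $N\equiv0$ then $F$ is this diagonal matrix, which is the claimed form with trivial nonlinear term, so assume $N\not\equiv0$; then $(a,b)\neq(0,0)$, $(\alpha,\beta)\neq(0,0)$ and $h\not\equiv0$. Writing $N(\kappa(x,y))=\kappa N(x,y)$ componentwise gives $\alpha(h(ax-by)-h(ax+by))=0$ and $\beta(h(ax-by)+h(ax+by))=0$. I would then run the case analysis using $a\alpha+b\beta=0$. If $\alpha\neq0\neq\beta$, the two identities give $h(ax+by)\equiv0$ with $ax+by$ a non-zero linear form, whence $h\equiv0$, impossible. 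If $\alpha=0$, then $\beta\neq0$ and $a\alpha+b\beta=0$ forces $b=0$, so $u=ax$ with $a\neq0$, and the second identity gives $h(ax)\equiv0$, again impossible. Hence $\beta=0$ and $\alpha\neq0$, so $a\alpha+b\beta=0$ forces $a=0$, $u=by$ with $b\neq0$, and the first identity reads $h(-by)=h(by)$, i.e.\ $h$ — and hence $p$ — is even. Then $p(by)$ is a polynomial in $y^2$, so $h(by)(\alpha,0)^T=(y^2\tilde p(y^2),0)^T$ for a suitable real polynomial $\tilde p$; combined with the diagonal $B$ this is exactly the asserted normal form. (Conversely such maps are triangular, hence do answer the DMYQ($2$) affirmatively, so the description is sharp.)

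For (ii) I would argue geometrically. If $N\not\equiv0$, pick $z_0$ with $N(z_0)\neq0$. The image of $N$ lies on the line $\ell=\R(\alpha,\beta)^T$, while $\zeta_n$-equivariance gives $N(\zeta_n z_0)=\zeta_n N(z_0)$, a non-zero vector of $\zeta_n\ell$. Thus $\ell\cap\zeta_n\ell\neq\{0\}$, i.e.\ $\zeta_n$ fixes $\ell$; but for $n\ge3$ the rotation $\zeta_n$ by the angle $2\pi/n\in(0,\pi)$ fixes no line — contradiction. So $N\equiv0$, $F(x,y)=B(x,y)^T$ is linear, and linearity forces $B\zeta_n=\zeta_n B$. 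For $n\ge3$, $\zeta_n\neq\pm I$ and the centralizer of $\zeta_n$ in the real $2\times2$ matrices is $\{sI+tJ:s,t\in\R\}$ with $J=\zeta_4$: for $t=0$ a homothety, for $t\neq0$ the product of the rotation matrix by $\arctan(t/s)$ with the scalar $\sqrt{s^2+t^2}$ (which is $<1$ by the eigenvalue hypothesis) — the stated conclusion.

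The routine pieces — diagonalising $B$, rewriting an even $h(by)$ as $y^2\tilde p(y^2)$, identifying the centralizer of a rotation — are standard. The step requiring real care is the case analysis in part (i): one must keep track of $a\alpha+b\beta=0$ together with the degenerate possibilities $a=0$ and $b=0$, verifying that every branch except $\beta=0,\ a=0$ collapses to $h\equiv0$, contradicting $N\not\equiv0$. A minor point is the wording of ``rotation matrix'' in part (ii), which must accommodate the contraction factor $\sqrt{s^2+t^2}<1$ allowed by the eigenvalue hypothesis.
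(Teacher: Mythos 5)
Your proposal is correct. For part (i) you follow essentially the paper's route: diagonality of $B$ from $\kappa B=B\kappa$, then a case analysis on $\alpha,\beta$ using the relation coming from $(\alpha,\beta)^T=\frac{1}{\det L}(-b,a)^T$ in the proof of Theorem~\ref{poly_lemma} (the paper invokes \eqref{alfabeta} to get $\alpha=0\Rightarrow b=0$ and $\beta=0\Rightarrow a=0$, which is exactly your $a\alpha+b\beta=0$); your extra trace argument, $\mathrm{tr}\,JF=\mathrm{tr}\,B+(a\alpha+b\beta)h'(u)$ being bounded, is a nice bonus because it justifies the orthogonality relation for \emph{any} representation in the normal form, without peeking inside the proof of Theorem~\ref{poly_lemma}. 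For part (ii) you genuinely diverge: the paper equates the two components of $N(\zeta_n\cdot(x,y))$ and $\zeta_n\cdot N(x,y)$ and, ``after some simplification,'' arrives at $-\beta^2 r=\alpha^2 r$, hence $r\equiv 0$; you instead observe that the image of $N$ lies on the line $\ell=\R(\alpha,\beta)^T$, so equivariance would force $\zeta_n\ell=\ell$, impossible for a rotation by $2\pi/n\in(0,\pi)$. This is cleaner, handles the subcases $\alpha=0$ or $\beta=0$ uniformly, and makes transparent why $n\geq 3$ (i.e.\ $\zeta_n\neq\pm I$) is what matters. You also supply the centralizer computation $\{sI+tJ\}$ proving the ``Moreover'' clause (homothety or scalar multiple of a rotation, with factor $\sqrt{s^2+t^2}<1$ by the eigenvalue hypothesis), a point the paper's proof leaves implicit; your remark that ``rotation matrix'' must be read up to this contraction factor is apt.
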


\begin{proof}
From Theorem \ref{poly_lemma} we know that, in order to satisfy the hypotheses of the DMYQ($2$), we must have
$F(x,y)=B(x,y)^T+ (\alpha u^2 p(u), \beta u^2 p(u))^T$.
Then $F$ is $\gamma$-equivariant if and only if both $B$ and the nonlinear part satisfy \eqref{equivariant}.
 We then write $N(x,y) = (\alpha r(u), \beta r(u))$ with $r(u)=u^2p(u)$. 
 The proof proceeds in the following two steps:
 \medbreak
	\noindent{\bf(i)} 
	If $\kappa \in \Gamma$, then $B\cdot \kappa=\kappa\cdot B$ if and only if $B$ is a diagonal matrix.
	Furthermore, we must have
	\begin{eqnarray*}
	N(\kappa. (x,y)) & = & (\alpha r(ax-by), \beta r(ax-by))^{T} \\
	& = & (\alpha r(ax+by), - \beta r(ax+by))^{T} = \kappa. N(x,y),
	\end{eqnarray*}
	from which it follows that, if both $\alpha$ and $\beta$ are nonzero, then
	$$
	\left\{ \begin{array}{l}
	r(ax+by) = r(ax-by) \\
	r(ax+by) = -r(ax-by)\ ,
	\end{array} \right. 
	$$
	which is to say that $r(ax-by)=-r(ax-by)$ and thus $r(u)=-r(u)$ for all $u \in \R^2$. Hence, $r$ is identically zero.
	
	If $\alpha=0$, and $\beta\neq 0$ then,  from \eqref{alfabeta} in the proof of Theorem~\ref{poly_lemma}, $b=0$.
	We then have 
	$$
	r(ax-by)=-r(ax+by)\qquad\Leftrightarrow\qquad
	r(ax)=-r(ax)
	$$
	meaning that $r$ is identically zero.
	
	If $\beta=0$, and $\alpha\neq 0$ then,  from \eqref{alfabeta} in the proof of Theorem~\ref{poly_lemma}, $a=0$.
	We then have 
	$$
	r(ax-by)=r(ax+by)\qquad\Leftrightarrow\qquad
	r(-by)=r(by)
	$$
	meaning that $r$ is an even polynomial in $y$. 
	
	If both $\alpha$ and $\beta$ are zero, the result holds trivially.
	\medbreak

	\noindent{\bf(ii)} 
	If $\zeta_n  \in \Gamma$ for $n \geq 3$, (\ref{equivariant}) implies that 
$$
	N(\zeta_n \cdot (x,y)) =
	\left( \begin{array}{c}\dpt
	\alpha r\left(a(x\cos{\frac{2\pi}{n}}-y\sin{\frac{2\pi}{n}}) + 
	b(x\sin{\frac{2\pi}{n}}+y\cos{\frac{2\pi}{n}})\right) \\
	{}\\ \dpt
	\beta r \left(a(x\cos{\frac{2\pi}{n}}-y\sin{\frac{2\pi}{n}}) + 
	b(x\sin{\frac{2\pi}{n}}+y\cos{\frac{2\pi}{n}})\right)
	\end{array}\right) 
$$
	must be equal to
	
$$
	 \zeta_n \cdot N (x,y)=\left(\begin{array}{c}\dpt
	 \alpha r(ax+by)\cos{\frac{2\pi}{n}} - \beta r(ax+by)\sin{\frac{2\pi}{n}}\\ 
	 {}\\ \dpt
	\alpha r(ax+by)\sin{\frac{2\pi}{n}} + \beta r(ax+by)\cos{\frac{2\pi}{n}} 
	\end{array}\right) .
$$
	We therefore must have
	$$
	\left\{ \begin{array}{l}
	\alpha r(a(x\cos{\frac{2\pi}{n}}-y\sin{\frac{2\pi}{n}}) + b(x\sin{\frac{2\pi}{n}}+y\cos{\frac{2\pi}{n}})) = \\
	\qquad= \alpha r(ax+by)\cos{\frac{2\pi}{n}} - \beta r(ax+by)\sin{\frac{2\pi}{n}} \\
	\mbox{} \\
	\beta r(a(x\cos{\frac{2\pi}{n}}-y\sin{\frac{2\pi}{n}}) + b(x\sin{\frac{2\pi}{n}}+y\cos{\frac{2\pi}{n}})) = \\
	\qquad= \alpha r(ax+by)\sin{\frac{2\pi}{n}} + \beta r(ax+by)\cos{\frac{2\pi}{n}}\ .
	\end{array} \right. 
	$$
	If one of either $\alpha$ or $\beta$ is zero, we observe that $r$ is identically zero since $n\geq 3$.
	Otherwise,  after some simplification, we obtain 
	$$
	-\beta^2r(ax+by) = \alpha^2 r(ax+by)
	$$
	and again we see that $r$ must be identically zero.
\end{proof}

The following result finishes our description of planar polynomial maps that provide an affirmative answer to the DMYQ($2$).

\begin{theorem} \label{teoGuapo}
A nonlinear equivariant polynomial map satisfying DMYQ(2) can only have $\Gamma = \Z_2$ as its symmetry group.
\end{theorem}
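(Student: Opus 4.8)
The plan is to combine the Proposition above with the classification of closed subgroups of $O(2)$. Since $\Gamma$ is a compact group of invertible linear maps of $\R^2$, averaging the Euclidean inner product over $\Gamma$ and changing coordinates accordingly allows us to assume $\Gamma\subseteq O(2)$; this change of coordinates preserves the hypotheses of the DMYQ($2$) and the nonlinearity of $F$, so nothing is lost. Up to a further rotation or reflection of coordinates, $\Gamma$ is then one of the standard closed subgroups of $O(2)$: $SO(2)$, $O(2)$, the cyclic group $\Z_n=\langle\zeta_n\rangle$ with $n\ge2$, or the dihedral group $\mathbf{D}_n=\langle\zeta_n,\kappa\rangle$ with $n\ge1$, where $\mathbf{D}_1=\langle\kappa\rangle$ and $\Z_2=\langle\zeta_2\rangle$ both have order two. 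The goal is to show that $F$ being nonlinear is compatible only with $\Gamma=\Z_2$ or $\Gamma=\mathbf{D}_1$, so that $\Gamma\cong\Z_2$ in every case.

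First I would discard every group that contains a rotation of order at least three. If $\Gamma$ equals $SO(2)$ or $O(2)$, or $\Gamma=\Z_n$ or $\mathbf{D}_n$ with $n\ge3$, then $\zeta_m\in\Gamma$ for some integer $m\ge3$, and part (ii) of the Proposition forces $F$ to be linear, contrary to the hypothesis. This leaves only $\Z_2$, $\mathbf{D}_1$ and $\mathbf{D}_2$.

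The single remaining case needing an argument is $\Gamma=\mathbf{D}_2$. One of the two reflections of $\mathbf{D}_2$ can be taken to be $\kappa$, so part (i) of the Proposition applies and
$$
F(x,y)=\left(\begin{array}{cc}d_1&0\\0&d_2\end{array}\right)\left(\begin{array}{c}x\\ y\end{array}\right)+y^2p(y^2)\left(\begin{array}{c}1\\0\end{array}\right).
$$
Now $\mathbf{D}_2$ also contains $\zeta_2$, i.e.\ $F(-x,-y)=-F(x,y)$; since $y^2p(y^2)$ is even in $y$, this forces $y^2p(y^2)=-y^2p(y^2)$, hence $p\equiv0$ and $F$ is linear, again a contradiction. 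So $\Gamma\neq\mathbf{D}_2$, and the only surviving possibilities are $\Z_2$ and $\mathbf{D}_1$, both isomorphic to $\Z_2$.

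I would close with the remark that the statement is not empty: $F(x,y)=(x/2+y^2,\,y/2)$ is nonlinear, has $0$ as a global attractor, satisfies the eigenvalue condition, and has symmetry group $\mathbf{D}_1$, while $F(x,y)=(x/2+y^3,\,y/2)$ has all the same properties with symmetry group $\Z_2=\langle\zeta_2\rangle$. The only genuinely delicate steps are the reduction to $\Gamma\subseteq O(2)$ together with the appeal to the subgroup classification, and the separate handling of $\mathbf{D}_2$, which is the one nontrivial closed subgroup of $O(2)$ containing a reflection but no rotation of order at least three. Once these are in place, the Proposition does the rest.
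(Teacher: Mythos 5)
Your proof is correct and follows essentially the same route as the paper: reduce $\Gamma$ to a closed subgroup of $O(2)$, eliminate every group containing a rotation of order at least three via part (ii) of the Proposition, and then rule out $\mathbf{D}_2\cong\Z_2\oplus\Z_2$ separately. The only (inessential) difference is in that last case, where you combine the $\kappa$-form from part (i) with oddness under $\zeta_2=-I$, whereas the paper applies the reflection form to both reflections $\kappa$ and $-\kappa$; both yield $r\equiv0$ immediately.
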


\begin{proof}
It is known (see, for instance, Golubitsky {\em et al.} \cite{GSS}, XII \S 1 (c)) that every compact Lie group in $GL(2)$ can be identified with a subgroup of the orthogonal group $O(2)$. 
The compact subgroups of  $O(2)$ that do not contain a rotation $\zeta_n$, $n\geq 3$, are, in suitable coordinates, the trivial subgroup generated by the identity; $\Z_2$, generated by either $\kappa$ or minus the identity; and $\Z_2\oplus\Z_2$
This last group contains the two reflections $\kappa$ and $-\kappa$. 
Therefore if $F$ has these symmetries, it must satisfy both
$$
F(x,y)=\left(\begin{array}{cc}d_1&0\\0&d_2\end{array}\right)\left(\begin{array}{c}x\\ y\end{array}\right)+
	\left(\begin{array}{c}r(y^2)\\ 0\end{array}\right)
$$
and
$$
F(x,y)=
	\left(\begin{array}{cc}d_1&0\\0&d_2\end{array}\right)\left(\begin{array}{c}x\\ y\end{array}\right)+
	\left(\begin{array}{c}0\\ \tilde{r}(x^2)\end{array}\right)
$$
and therefore, $r=\tilde{r}=0$.
Since we are assuming $\Gamma$ to be nontrivial, the proof is finished.
\end{proof}

We end this note with the following example: the lowest order (and perhaps simplest) nonlinear polynomial map whose symmetry group is $\Z_4$ is of the form $F(x,y) = (\alpha x-\beta y^3, \alpha y +\beta x^3)$.
By Theorem~\ref{teoGuapo} this map cannot answer the  DMYQ($2$) in the affirmative.
Indeed, 
it is clear either by direct computation or by applying Lemma 1.1 in \cite{CGM} that the eigenvalues of the jacobian of $F$ are not all inside the unit disk. 
 
 In fact Theorem~\ref{teoGuapo} together with Theorem~B of \cite{CGM} 
lead to a second criterion for studying the spectrum of a polynomial planar map as follows:

\begin{corollary}\label{Coro1}
Let $F:\R^2\rightarrow\R^2$ be a polynomial map. If $F$ has a non-trivial symmetry group different from $ \Z_2$, then there exists a point in $\R^2$ where the jacobian of $F$ has an eigenvalue outside the unit disk.
\end{corollary}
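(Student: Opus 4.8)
The plan is to deduce Corollary~\ref{Coro1} directly from the structural results already established, namely Theorem~\ref{teoGuapo} together with Theorem~B of \cite{CGM}. I would argue by contraposition: suppose that $F$ has a non-trivial symmetry group $\Gamma \neq \Z_2$, and suppose for contradiction that the jacobian $JF(x,y)$ has all eigenvalues inside the unit disk at every point of $\R^2$. Then $F$ satisfies the hypotheses of the DMYQ($2$), so Theorem~\ref{poly_lemma} applies and gives the normal form, and the full force of the Proposition and Theorem~\ref{teoGuapo} becomes available.

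The first case to dispose of is when $F$ is nonlinear. Then Theorem~\ref{teoGuapo} says the only admissible symmetry group is $\Z_2$, contradicting $\Gamma \neq \Z_2$. The remaining case is when $F$ is linear, say $F(x,y) = B(x,y)^T$ with $B \in GL(2,\R)$ having spectral radius less than one (the eigenvalue hypothesis forces $B$ invertible up to the fixed-point condition, but in any case the eigenvalues lie inside the unit disk). Here I would invoke the classification used in the proof of Theorem~\ref{teoGuapo}: every compact Lie subgroup of $GL(2)$ is conjugate to a subgroup of $O(2)$, and those not containing a rotation $\zeta_n$ with $n \geq 3$ are, in suitable coordinates, the trivial group, $\Z_2$, or $\Z_2 \oplus \Z_2$. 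Since $\Gamma$ is non-trivial and not $\Z_2$, either $\Gamma$ contains some $\zeta_n$ with $n \geq 3$, or $\Gamma = \Z_2 \oplus \Z_2$ (up to conjugacy), or $\Gamma$ contains $O(2)$ or $SO(2)$ outright. In the rotation cases, $B$ must commute with $\zeta_n$ for $n \geq 3$, which forces $B$ to be a rotation-scaling matrix $\lambda R_\theta$; the eigenvalues of such a matrix are $\lambda e^{\pm i\theta}$, with modulus $|\lambda| < 1$, so this is in fact consistent — meaning the linear case does \emph{not} yield a contradiction by itself.

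This shows the main obstacle: as stated, the corollary is actually \emph{false} without an extra hypothesis, because a contracting rotation such as $F(x,y) = \tfrac12 R_\theta (x,y)^T$ has symmetry group $SO(2) \supsetneq \Z_2$ yet all eigenvalues strictly inside the unit disk. The fix — which matches the second sentence of the Introduction and the phrasing "nonlinear" in Theorem~\ref{teoGuapo} — is to restrict the statement to \emph{nonlinear} polynomial maps. With that amendment the proof is immediate: a nonlinear $F$ whose jacobian has spectral radius $<1$ everywhere satisfies DMYQ($2$)'s hypotheses, hence by Theorem~\ref{teoGuapo} has symmetry group exactly $\Z_2$; contrapositively, a nonlinear $F$ with symmetry group $\neq \Z_2$ (and non-trivial) must have a jacobian eigenvalue of modulus $\geq 1$ somewhere, and a short separate argument — the eigenvalues of $JF$ form a semialgebraic set in $(x,y)$ that is unbounded unless $F$ is affine, or simply Lemma 1.1 of \cite{CGM} applied to the non-constant entries of $JF$ — upgrades "modulus $\geq 1$" to "strictly outside the unit disk." I would therefore write the corollary with the word "nonlinear" inserted, and present the proof as the two-line contrapositive of Theorem~\ref{teoGuapo} followed by the standard spectrum-unboundedness remark, exactly parallel to the proof pattern used for the first Corollary~\ref{Coro1} after Theorem~\ref{poly_lemma}.
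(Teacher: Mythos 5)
Your core argument coincides with the paper's: the paper gives no separate proof, obtaining the corollary as the contrapositive of Theorem~\ref{teoGuapo} (which in turn rests on Theorem~B of \cite{CGM}), exactly as in your main paragraph. Your caveat about the linear case is a genuine and correct catch: Theorem~\ref{teoGuapo} concerns only nonlinear maps, and a contracting rotation such as $F(x,y)=\frac12(x\cos\theta-y\sin\theta,\;x\sin\theta+y\cos\theta)$, or $F=\frac12 I$, is a linear polynomial map whose symmetries contain $SO(2)$ (resp.\ $O(2)$) while every eigenvalue of $JF$ has modulus $\frac12$; so the statement as printed does need the word ``nonlinear'', in line with Theorem~\ref{teoGuapo}. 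Note also that once $\Gamma$ is nontrivial and different from $\Z_2$ it contains $-I$ or a rotation of order at least three, whose only fixed point is the origin, so $F(0)=0$ is automatic and Theorem~\ref{poly_lemma} applies; you (like the paper) pass over this silently, but it is harmless.

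The one step of yours that does not hold up is the proposed upgrade from ``some eigenvalue of modulus at least one'' to ``eigenvalue strictly outside the closed unit disk''. The claim that the eigenvalues of $JF$ are unbounded unless $F$ is affine is false: $F(x,y)=(x+y^2,\,y/2)$ is not affine, yet its jacobian has constant eigenvalues $1$ and $\frac12$. Likewise, Lemma~1.1 of \cite{CGM} gives a strict escape only when $\det JF$ or the trace of $JF$ is non-constant, which nonlinearity alone does not guarantee. No such upgrade is needed, however: as in the first corollary, ``outside the unit disk'' is meant as the plain negation of the DMYQ($2$) eigenvalue hypothesis, i.e.\ modulus at least one, and with that reading your two-line contrapositive of Theorem~\ref{teoGuapo}, with ``nonlinear'' inserted in the hypothesis, is exactly the intended proof.
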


\bigbreak

\paragraph{Acknowledgements:} 
The research of all authors at Centro de Matem\'atica da Universidade do Porto (CMUP)
 had financial support from 
 the European Regional Development Fund through the programme COMPETE and
 from  the Portuguese Government through the Funda\cao{} para
a Ci\^encia e a Tecnologia (FCT) under the project PEst-C/MAT/UI0144/2011.
B. Alarc\'on was also supported from Programa Nacional de Movilidad de Recursos Humanos of the Plan Nacional de I+D+I 2008-2011 of the Ministerio de Educaci\'on (Spain) and grant MICINN-08-MTM2008-06065 of the Ministerio de Ciencia e Innovaci\'on (Spain).

\end{document}